\def\newaliasedtheorem#1[#2]#3{
	\newaliascnt{#1@alt}{#2}
	\newtheorem{#1}[#1@alt]{#3}
	\expandafter\newcommand\csname #1@altname\endcsname{#3}
}
\theoremstyle{plain}
\newtheorem{theorem}{Theorem}[section]
\theoremstyle{definition}
\theoremstyle{remark}
\newcommand{\R}{\mathbb{R}}
\newcommand{\LL}{\mathbb{L}}
\newcommand{\C}{\mathbb{C}}
\let\altphi\phi
\let\phi\varphi
\let\varphi\altphi
\let\altphi\undefined
\newcommand{\average}{{\mathchoice {\kern1ex\vcenter{\hrule height.4pt
width 6pt
depth0pt} \kern-9.7pt} {\kern1ex\vcenter{\hrule height.4pt width 4.3pt
depth0pt}
\kern-7pt} {} {} }}
\address{\textsc{Daniela Di Donato}: 
Dipartimento di Ingegneria Industriale e Scienze Matematiche, Via Brecce Bianche, 12 60131 Ancona, Universit\'a Politecnica delle Marche.}
\email{d.didonato@staff.univpm.it}
\title{Normed spaces using intrinsically Lipschitz sections and Extension Theorem for the intrinsically H\"older sections}
\date{\today}
\author{ Daniela Di Donato}
\email{}
\begin{document}

\begin{abstract}

The purpose of this article is twofold: first of all, we want to define two norms using the space of intrinsically Lipschitz sections. On the other hand, we want to generalize an Extension Theorem proved by the author in the context of the intrinsically H\"older sections with target a topological space $Y.$ Here our target will be $Y\times \R^s$ with $s \geq 1$ instead of $Y.$

		\end{abstract}

\maketitle 
\tableofcontents

\section{Introduction}
In this paper, we focus our attention on a new point of view for the intrinsically Lipschitz graphs in the Franchi-Serapioni-SerraCassano sense  \cite{FSSC, FSSC03, MR2032504}  (see also  \cite{SC16, FS16}) in metric spaces.

They introduced and analized this notion in order to establish a good notion of rectifiability in a particular metric spaces called subRiemannian Carnot groups \cite{ABB, BLU, CDPT}.

In the usual way, Federer \cite{Federer} says that a subset of $\R^n$ is countably $d$-rectifiable if it is possible to cover it with a countable union of suitable graphs. More precisely, he considers graphs of Lipschitz maps $f:\R^d \to \R^{n-d}.$ However, Ambrosio and Kirchheim \cite{AmbrosioKirchheimRect} (see also \cite{MAGNANI1}) proved that this definition does not work in Carnot groups and so many mathematicians give other notions of rectifiability. The reader can see \cite{AM2022.1, AM2022.2, DB22, DS91a, DS91b, MR2247905, MR2048183, NY18}. As we said, another possible solution is given by  Franchi-Serapioni-SerraCassano with  the so-called "Intrinsic Lipschitz maps" in Carnot groups. The idea is similar to Euclidean case: firstly, they introduce suitable cones called intrinsic cones which are not equivalent with the Euclidean ones; and, then, they say that a map $\phi$ is intrinsic Lipschitz if it is possible to have for any point $p$ belongs to the graph of $\phi$ an empty intersection between a suitable intrinsic cone with vertex $p$ and the graph of this map.

Recently, Le Donne and the author generalize this concept in metric spaces (see \cite{DDLD21}). 

A basic difference is the following: in Franchi, Serapioni and Serra Cassano approach, they consider intrinsically Lipschitz maps. On the other hand, in our approach we consider the graphs and this a bit change is so important because:
 \begin{itemize}
\item The setting are more general: the class of the metric spaces is larger than the class of Carnot groups.
\item The proofs are elegantly short and simple.
\item We use basic mathematical tools in the proofs.
\end{itemize}

In a natural way, in \cite{D22.1} the author introduce the notion of intrinsically H\"older sections which extend the Lipschitz ones. Here, 	our setting is the following. We have a metric space $X$, a topological space $Y$, and a 
quotient map $\pi:X\to Y$, meaning
continuous, open, and surjective.
The standard example for us is when $X$ is a metric Lie group $G$ (meaning that the Lie group $G$ is equipped with a left-invariant distance that induces the manifold topology), for example a subRiemannian Carnot group, 
and $Y$ is the space of left cosets $G/H$, where 
$H<G$ is a  closed subgroup and $\pi:G\to G/H$ is the projection modulo $H$, $g\mapsto gH$.

\begin{defi}[Intrinsic H\"older section]\label{Intrinsic H\"older  section}
Let $(X,d)$ be a metric space and let $Y$ be a topological space. We say that a map $\phi :Y \to X$ is a {\em section} of a quotient map $\pi :X \to Y$ if
\begin{equation*}
\pi \circ \phi =\mbox{id}_Y.
\end{equation*}
Moreover, we say that $\phi$ is an {\em intrinsically $(L, \alpha)$-H\"older  section} with constant $L>0$ and $\alpha \in (0,1)$ if in addition
\begin{equation}\label{Intrinsic H\"older per semplificare}
d(\phi (y_1), \phi (y_2)) \leq L d(\phi (y_1), \pi ^{-1} (y_2))^\alpha +  d(\phi (y_1), \pi ^{-1} (y_2)), \quad \mbox{for all } y_1, y_2 \in Y.
\end{equation}

Equivalently, we are requesting that
\begin{equation*}
d(x_1, x_2) \leq L d(x_1, \pi ^{-1} (\pi (x_2)))^\alpha + d(x_1, \pi ^{-1} (\pi (x_2))), \quad \mbox{for all } x_1,x_2 \in \phi (Y) .
\end{equation*}
\end{defi}

When $\alpha =1,$ a section $\phi$ is intrinsic Lipschitz in the sense of \cite{DDLD21}; and, if in addition, $\pi$ is a Lipschitz quotient or submetry \cite{MR1736929, Berestovski}, the results trivialize, since in this case being intrinsically Lipschitz  is equivalent to biLipschitz embedding, see Proposition 2.4 in \cite{DDLD21}. In a natural way, following the seminal papers \cite{AGS08.2, MR2480619, Savar1} (see also \cite{AGS08.4, AGS08.3, Savar2, S06, MR2459454}), the author introduced and studied the link between the intrinsic sections/intrinsic Lipschitz sections and the intrinsic Hopf-Lax semigroups \cite{D22.NOV26A, D22.NOV26B}.

The purpose of this article is twofold: first of all, we want to define two norms using the notion of Lipschitz sections. Second, we want to generalize an Extension Theorem with target $Y$ which is a topological space; in this paper, our target will be $Y \times \R^s$ instead of $Y.$

More precisely, in Section \ref{Normed space for the intrinsic continuous sections}, the main results are Theorem \ref{24theoremNormed.1} and \ref{24theoremNormed.2}. Here, we define two possible normed spaces using the following ingredients:
\begin{itemize}
\item we know that there is a large class of intrinsically H\"older sections and so Lipschitz sections that is a vector space over $\R$ or $\C$ (see Theorem \ref{theorem24});
\item we can define two different norms noting the following simple fact: in the usual case, we have that $d(x,y) =d(y,x)$ for any point $x,y \in X;$ on the other hand, in our intrinsic context, in general, we have that $d(f(x),\pi^{-1} (y)) \ne d(f(y),\pi^{-1} (x)),$ for every $x,y \in X.$ 
\item we obtain the homogeneity of our norms defined in \eqref{banachspace} and in \eqref{banachspace.18}  thanks to linearity of $\pi$ and, in particular, to Lemma \ref{lem22a}.
\end{itemize}

Finally, in Section \ref{Level sets and extensions} the main result is Theorem \ref{thm3.Rkappa} which generalizes Extension Theorem for the intrinsically H\"older sections proved in \cite[Theorem 1.3]{D22.1}. The main difference is that, in this project, the target space is a topological space $Y\times \R^s$ with $s\geq 1$ instead of $Y.$ As in Vittone's case, we build each component $f_i$ for $i=1,\dots , s$ separately and then join them without any additional assumptions. However, the final step when the target space is only $Y$ does not provide a Lipschitz map $f=(f_1,\dots , f_s).$


\section{Intrinsically  H\"older  sections}
\label{sec:equiv_def}

 \subsection{Intrinsically  H\"older  sections: when $Y$ is bounded}
Definition \ref{Intrinsic H\"older  section} it is very natural if we think that what we are studying graphs of appropriate maps. However, in the following proposition, we introduce an equivalent condition of \eqref{Intrinsic H\"older per semplificare} when $Y$ is a bounded set.

\begin{prop}\label{Intrinsic H\"older  section.2}
Let $\pi: X \to Y$ be a quotient map between a metric space $X$ and a topological and bounded space $Y$ and let $\alpha \in (0,1).$ The following are equivalent:
\begin{enumerate}
\item there is $L >0$ such that
\begin{equation*}
d(\phi (y_1), \phi (y_2)) \leq L d(\phi (y_1), \pi ^{-1} (y_2))^\alpha +  d(\phi (y_1), \pi ^{-1} (y_2)), \quad \mbox{for all } y_1, y_2 \in Y.
\end{equation*}
\item there is $K \geq 1$ such that
\begin{equation}\label{equationEQUIV}
d(\phi (y_1), \phi (y_2)) \leq K d(\phi (y_1), \pi ^{-1} (y_2))^\alpha , \quad \mbox{for all } y_1, y_2 \in Y.
\end{equation}
\end{enumerate}
\end{prop}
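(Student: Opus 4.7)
The plan is to prove the two implications separately, with direction $(2) \Rightarrow (1)$ being trivial and direction $(1) \Rightarrow (2)$ being where the boundedness hypothesis enters.

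For $(2) \Rightarrow (1)$, the argument is immediate: since $d(\phi(y_1), \pi^{-1}(y_2)) \geq 0$, we have
\[
K\, d(\phi(y_1), \pi^{-1}(y_2))^\alpha \leq K\, d(\phi(y_1), \pi^{-1}(y_2))^\alpha + d(\phi(y_1), \pi^{-1}(y_2)),
\]
so \eqref{equationEQUIV} yields condition (1) with $L := K$.

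For $(1) \Rightarrow (2)$, the key observation is that boundedness of $Y$ (interpreted in the natural way as $\phi(Y)$ being bounded in $X$, since $\phi(y_2) \in \pi^{-1}(y_2)$ gives $d(\phi(y_1), \pi^{-1}(y_2)) \leq d(\phi(y_1), \phi(y_2)) \leq \mathrm{diam}(\phi(Y))$) supplies a finite constant $M > 0$ such that
\[
d(\phi(y_1), \pi^{-1}(y_2)) \leq M \qquad \text{for all } y_1, y_2 \in Y.
\]
Once such an $M$ is in hand, I would use the elementary inequality valid for any $t \in [0, M]$ and any $\alpha \in (0,1)$, namely
\[
t = t^\alpha \cdot t^{1-\alpha} \leq M^{1-\alpha}\, t^\alpha,
\]
and apply it with $t = d(\phi(y_1), \pi^{-1}(y_2))$ to the linear term in (1). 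This converts the sum $L\,t^\alpha + t$ into $(L + M^{1-\alpha})\,t^\alpha$, giving (2) with the constant $K := \max\{1,\, L + M^{1-\alpha}\}$.

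The main (only) obstacle is fixing the correct meaning of \textquotedblleft bounded\textquotedblright{} for the topological space $Y$: since $Y$ carries no metric, one must translate this into the statement that $\sup_{y_1,y_2 \in Y} d(\phi(y_1), \pi^{-1}(y_2))$ is finite. Under that reading the proof reduces to the one-line interpolation $t \leq M^{1-\alpha} t^\alpha$, and the role of the bounded hypothesis is clear: without it, condition (1) is strictly weaker than (2), because the linear term $d(\phi(y_1), \pi^{-1}(y_2))$ dominates the H\"older term $d(\phi(y_1), \pi^{-1}(y_2))^\alpha$ precisely when the distance is large.
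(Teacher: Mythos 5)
Your proof is correct, and the argument is exactly the natural one. The paper itself does not give a proof of this proposition (it appears to defer to the earlier reference where it was introduced), so there is no in-text proof to compare against; your $(2)\Rightarrow(1)$ direction is indeed immediate, and the $(1)\Rightarrow(2)$ direction via the interpolation $t = t^\alpha t^{1-\alpha}\leq M^{1-\alpha}t^\alpha$ for $t\in[0,M]$ is the right one-line idea.

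One small point worth keeping as explicitly as you stated it: the phrase ``topological and bounded space $Y$'' has no intrinsic meaning since $Y$ carries no metric, and your reading --- that the relevant quantity $d(\phi(y_1),\pi^{-1}(y_2))$ is uniformly bounded, which follows from $\phi(Y)$ being a bounded subset of $X$ via $d(\phi(y_1),\pi^{-1}(y_2))\leq d(\phi(y_1),\phi(y_2))\leq \operatorname{diam}\phi(Y)$ --- is the interpretation that makes the proposition both true and nonvacuous. Your closing remark that without such a bound condition $(1)$ is genuinely weaker than $(2)$ (the linear term dominates for large distances) correctly pinpoints why boundedness cannot be dropped.
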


We further rephrase the definition as saying that $\phi(Y)$, which we call the {\em graph} of $\phi$, avoids some particular sets (which depend on $\alpha , L$ and $\phi$ itself):


\begin{prop}\label{propo_ovvia} Let $\pi :X \to Y$  be a  quotient map between a metric space and a topological space, $\phi: Y\to X$ be a section of $\pi$, $\alpha \in (0,1)$ and $L>0$.
Then $\phi$ is intrinsically $(L, \alpha)$-H\"older if and only if
\begin{equation*}
 \phi  (Y) \cap  R_{x,L} = \emptyset , \quad \mbox{for all } x \in \phi (Y),
\end{equation*}
where $$R_{x,L} := \left\{ x'\in X \;|\;   L d(x', \pi ^{-1} (\pi (x)))^\alpha + d(x', \pi ^{-1} (\pi (x)))  <   d(x', x)\right\}.$$


%
\end{prop}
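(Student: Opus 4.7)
The plan is to observe that this proposition is essentially a rephrasing of the second (equivalent) form of Definition \ref{Intrinsic H\"older  section}, with the strict versus non-strict inequality in the definition of $R_{x,L}$ chosen precisely so that ``$x' \notin R_{x,L}$'' matches the Hölder bound. No real analytic content is needed; the whole argument is an unfolding of definitions, so I would present it as a single chain of equivalences.

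First, I would rewrite the set condition $\phi(Y) \cap R_{x,L} = \emptyset$ for all $x \in \phi(Y)$ in quantifier form: it says that for all $x, x' \in \phi(Y)$, $x' \notin R_{x,L}$, i.e.
\begin{equation*}
d(x', x) \leq L\, d(x', \pi^{-1}(\pi(x)))^\alpha + d(x', \pi^{-1}(\pi(x))).
\end{equation*}
Next, using that $\phi$ is a section, write $x = \phi(y_2)$ and $x' = \phi(y_1)$ for some $y_1, y_2 \in Y$; then $\pi(x) = \pi(\phi(y_2)) = y_2$, so $\pi^{-1}(\pi(x)) = \pi^{-1}(y_2)$, and the inequality becomes exactly the inequality appearing in \eqref{Intrinsic H\"older per semplificare} from Definition \ref{Intrinsic H\"older  section}.

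Conversely, given any $x, x' \in \phi(Y)$, set $y_1 := \pi(x')$ and $y_2 := \pi(x)$; since $\phi$ is a section one has $x' = \phi(y_1)$ and $x = \phi(y_2)$, so the intrinsic Hölder inequality for $(y_1, y_2)$ translates back into $x' \notin R_{x,L}$. Collecting both directions yields the equivalence claimed in the proposition.

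The only subtlety worth flagging is the use of the strict inequality ``$<$'' in the definition of $R_{x,L}$: without it, one would obtain ``$<$'' instead of ``$\leq$'' in \eqref{Intrinsic H\"older per semplificare} and the equivalence would fail when equality holds in the Hölder bound (e.g. at $y_1 = y_2$, where both sides vanish). I expect this to be the only point that requires a brief comment; otherwise the proof is a one-line unfolding.
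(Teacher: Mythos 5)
Your proof is correct and is precisely the unfolding of definitions that the paper has in mind: the paper itself offers no written proof, remarking only that ``Proposition \ref{propo_ovvia} is a triviality,'' so your chain of equivalences is effectively the intended argument. Your side note about the strict inequality ``$<$'' in $R_{x,L}$ is also accurate: with a non-strict inequality, $x$ itself would always lie in $R_{x,L}$ (both sides vanish at $x'=x$), and the avoidance condition could never hold, so strictness is exactly what makes the complement match the ``$\leq$'' in \eqref{Intrinsic H\"older per semplificare}.
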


Proposition \ref{propo_ovvia} is a triviality, still its purpose is to stress the analogy with the intrinsically Lipschitz sections theory introduced in \cite{DDLD21} when $\alpha =1$.  In particular, the sets $R_{x,L}$ are the intrinsic cones in the sense of  Franchi, Serapioni and Serra Cassano when $X$ is a  subRiemannian Carnot group and $\alpha =1.$ The reader can see \cite{D22.3} for a good notion of intrinsic cones in metric groups.

 \subsection{Equivalent definition for intrinsic H\"older sections}
\begin{defi}[Intrinsically H\"older  with respect to  a section]\label{defwrtpsinew}
 Given  sections 
  $\phi, \psi :Y\to X$   
  of $\pi$. We say that   $\phi $ is {\em intrinsically $(L, \alpha)$-H\"older with respect to  $\psi$ at point $\hat x$}, with $L>0, \alpha \in (0,1)$ and $\hat x\in X$, if
\begin{enumerate}
\item $\hat x\in \psi(Y)\cap \phi (Y);$
\item $\phi  (Y) \cap  C_{\hat x,L}^{\psi} = \emptyset ,$
\end{enumerate}
where
$$ C_{\hat x,L}^{\psi} := \{x\in X \,:\,  d(x, \psi (\pi (x))) > L d(\hat x, \psi (\pi (x)))^\alpha + d(\hat x, \psi (\pi (x))) \}.   $$
\end{defi}

  \begin{rem} Definition~\ref{defwrtpsinew} can be rephrased as follows.
 A section $\phi  $ is intrinsically $(L,\alpha)$-H\"older with respect to  $\psi$ at point $\hat x$ if
 and only if 
 there is $\hat y\in Y$ such that  $\hat x= \phi (\hat y)=\psi(\hat y)$ and
\begin{equation}\label{defintrlipnuova}
 d(x, \psi (\pi (x))) \leq L d(\hat x, \psi (\pi ( x)))^\alpha + d(\hat x, \psi (\pi (x))), \quad \forall x \in \phi (Y), 
\end{equation}
which equivalently means 
\begin{equation}\label{equation28.0}
 d(\phi (y), \psi (y)) \leq L d(\psi(\hat y), \psi (y))^\alpha +d(\psi(\hat y), \psi (y)) ,\qquad \forall y\in Y. 
\end{equation}
  \end{rem}  

Notice that Definition \ref{defwrtpsinew} does not induce an equivalence relation because of lack of symmetry in the right-hand side of \eqref{equation28.0}. However, following Cheeger theory \cite{C99} (see also \cite{K04, KM16}), in \cite[Theorem 4.2]{D22.1} we give an  equivalent property of H\"older section. 

Being intrinsically Lipschitz section is equivalent to the last definition as proved in  \cite[Proposition 1.5]{D22.1}
 \begin{prop}\label{linkintrinsicocneelip}
   Let $X  $ be a metric space, $Y$ a topological and bounded space, $\pi :X \to Y$   a  quotient map, $L\geq 1$ and $\alpha , \beta , \gamma \in (0,1)$.
Assume that   every point $x\in X$ is contained in the image of an intrinsic $(L,\alpha )$-H\"older section $\psi_x$ for $\pi$.
 Then for every section $\phi :Y\to X$ of $\pi$ the following are equivalent:
   \begin{enumerate}
\item for all $x\in\phi(Y)$ the section $\phi $ is intrinsically $(L_1,\beta )$-H\"older with respect to  $\psi_x$ at   $x;$
\item  the section $\phi $  is intrinsically $(L_2,\gamma)$-H\"older.
\end{enumerate}
   \end{prop}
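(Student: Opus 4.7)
The plan is to prove the two implications separately, using in each case the triangle inequality together with the basic comparison $d(x,\pi^{-1}(y)) \leq d(x,\psi_x(y))$, which holds because $\psi_x(y)$ lies in the fiber $\pi^{-1}(y)$. The boundedness of $Y$ will allow me to invoke Proposition~\ref{Intrinsic H\"older  section.2} to pass between the weak form $Lr^\alpha + r$ and the strong form $Kr^\alpha$ of the H\"older estimate, and it will also force a uniform upper bound on $r := d(\phi(y_1),\pi^{-1}(y_2))$. I do not expect the two parameter pairs $(L_1,\beta)$ and $(L_2,\gamma)$ to match literally; the argument will typically produce $\gamma = \alpha\beta$ in one direction and $\beta = \gamma$ in the other, so the equivalence is qualitative.

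For $(2)\Rightarrow(1)$, fix $x=\phi(\hat y)\in\phi(Y)$ and pick the $(L,\alpha)$-H\"older section $\psi_x$ given by the hypothesis. Because $\psi_x$ is a section and $\pi(x)=\hat y$, automatically $\psi_x(\hat y)=x$, so the first clause of Definition~\ref{defwrtpsinew} holds. For $y\in Y$, the triangle inequality $d(\phi(y),\psi_x(y)) \leq d(\phi(y),x) + d(x,\psi_x(y))$ together with the global $(L_2,\gamma)$-H\"older bound on $\phi$ and with $d(x,\pi^{-1}(y)) \leq d(x,\psi_x(y))$ yields
\[
d(\phi(y),\psi_x(y)) \leq L_2\, d(x,\psi_x(y))^\gamma + 2\,d(x,\psi_x(y)).
\]
Applying Proposition~\ref{Intrinsic H\"older  section.2} to $\psi_x$ one gets $d(x,\psi_x(y)) \leq K\, d(x,\pi^{-1}(y))^\alpha \leq K\, d(x,\psi_x(y))^\alpha$, hence a uniform upper bound $d(x,\psi_x(y)) \leq M'$. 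Choosing $\beta=\gamma$ and absorbing the linear term via $d \leq (M')^{1-\beta} d^\beta$ then produces the desired estimate~\eqref{equation28.0} with a larger constant $L_1$.

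For $(1)\Rightarrow(2)$, fix $y_1,y_2 \in Y$, set $x_1 = \phi(y_1)$, and apply the local H\"older bound~\eqref{equation28.0} with $\hat y = y_1$ and $y = y_2$. The triangle inequality gives
\[
d(\phi(y_1),\phi(y_2)) \leq d(x_1,\psi_{x_1}(y_2)) + d(\psi_{x_1}(y_2),\phi(y_2)) \leq L_1\, d(x_1,\psi_{x_1}(y_2))^\beta + 2\,d(x_1,\psi_{x_1}(y_2)).
\]
Setting $r := d(x_1,\pi^{-1}(y_2))$ and invoking the strong form of the $(L,\alpha)$-H\"older property of $\psi_{x_1}$ from Proposition~\ref{Intrinsic H\"older  section.2} yields $d(x_1,\psi_{x_1}(y_2)) \leq K r^\alpha$, so the right-hand side becomes $L_1 K^\beta r^{\alpha\beta} + 2Kr^\alpha$. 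The same comparison $r \leq d(x_1,\psi_{x_1}(y_2)) \leq Kr^\alpha$ forces $r \leq K^{1/(1-\alpha)}$, which makes $r^\alpha$ comparable to $r^{\alpha\beta}$; setting $\gamma = \alpha\beta$ then gives a global $(L_2,\gamma)$-H\"older bound on $\phi$.

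The main obstacle is the bookkeeping around exponents and constants: one must carefully track the passage between the weak and strong H\"older forms supplied by Proposition~\ref{Intrinsic H\"older  section.2} and verify that the seemingly unconstrained quantity $r = d(x,\pi^{-1}(y))$ is in fact uniformly bounded. The non-obvious step is that this bound comes not from any direct hypothesis on $\phi$ but from the auxiliary section $\psi_x$, whose H\"older estimate trivially forces $r \leq K r^\alpha$, and hence $r$ bounded. Without this, the comparison of $r^\alpha$ with $r^{\alpha\beta}$ that drives $(1)\Rightarrow(2)$ would fail for large $r$.
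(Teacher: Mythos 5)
The paper itself does not reproduce a proof of this proposition — it just cites Proposition~1.5 of \cite{D22.1} — so I can only assess your argument on its own merits, not compare it line by line with the author's.

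Your proof is correct in substance, and you have been appropriately honest about the one genuine subtlety. In $(2)\Rightarrow(1)$ the chain $d(\phi(y),\psi_x(y)) \leq d(\phi(y),x) + d(x,\psi_x(y))$, the global H\"older bound applied to $d(\phi(\hat y),\phi(y))$, and the comparison $d(x,\pi^{-1}(y)) \leq d(x,\psi_x(y))$ correctly yield $L_2 r^{\gamma} + 2r$ with $r = d(\psi_x(\hat y),\psi_x(y))$; the self-referential inequality $r \leq K r^{\alpha}$ coming from Proposition~\ref{Intrinsic H\"older  section.2} applied to $\psi_x$ does give $r \leq K^{1/(1-\alpha)}$, and absorbing one linear term into the H\"older term via $r \leq (M')^{1-\beta} r^{\beta}$ closes the estimate. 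Similarly, $(1)\Rightarrow(2)$ via $d(\phi(y_1),\phi(y_2)) \leq L_1 d(x_1,\psi_{x_1}(y_2))^{\beta} + 2 d(x_1,\psi_{x_1}(y_2))$ followed by the substitution $d(x_1,\psi_{x_1}(y_2)) \leq K r^{\alpha}$ is correct, and you correctly identify that the bound $r \leq K^{1/(1-\alpha)}$ is what lets you compare $r^{\alpha}$ with $r^{\alpha\beta}$. Your observation that $\psi_x(\hat y) = x$ is forced (since $x = \psi_x(y')$ implies $y' = \pi(x) = \hat y$) is also right and needed.

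One caution worth stating explicitly: as you yourself flag, the two implications hold with different exponent relations ($\beta = \gamma$ one way, $\gamma = \alpha\beta$ the other). Since the proposition as stated fixes $\alpha,\beta,\gamma$ independently at the outset and does not spell out how $L_1,L_2$ depend on them, a reader could take it to assert an equivalence for the \emph{same} pair $(\beta,\gamma)$ in both directions, which your argument (and, I believe, any argument) does not deliver: the loss of exponent by a factor $\alpha$ when passing through the auxiliary $(L,\alpha)$-H\"older section $\psi_x$ is unavoidable. So your proof establishes the intended qualitative equivalence, but if the statement were read with the literal quantification ``for these fixed $\beta,\gamma$'' it would need the relation $\gamma \leq \alpha\beta$ (resp.\ $\beta \leq \gamma$) for the corresponding implication to hold. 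You have effectively already said this, and it is the honest reading of the result.
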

   
   We conclude this section recall an important concept which we will be used later.

       \begin{defi}[Intrinsic H\"older set with respect to $\psi$]\label{defwrtpsinew.9apr.23} Let $\alpha \in (0,1]$ and $\psi: Y \to X$ a section of $\pi$.  We define the set  of all  intrinsically H\"older  sections  of $\pi$ with respect to  $\psi$ at point $\hat x$ as
\begin{equation*}
\begin{aligned}
H _{\psi , \hat x, \alpha} & :=\{ \phi :Y\to X \mbox{ a section of $\pi$}  \, :\, \phi \mbox{ is intrinsically $(\tilde L, \alpha )$-H\"older w.r.t. $\psi$ at point $\hat x$} \\
& \quad \quad \mbox{  for some $\tilde L>0$} \}.
 \end{aligned}
\end{equation*}
  \end{defi}
  
%
%
   
In particular, in \cite{D22.1} we have the following statement regarding  the set $H _{\psi , \hat x, \alpha} $.
  
%

   \begin{theorem}[Theorem 3.5 \cite{D22.1}]\label{theorem24} Let $\pi :X \to Y$ is a linear and quotient map from a normed space $X$ to a topological space $Y.$ Assume also that $\psi :Y \to X$ is a section of $\pi$  and $\{\lambda \hat x\, :\, \lambda \in \R^+\} \subset X$ with $\hat x \in \psi (Y).$
   
Then, for any $\alpha \in (0,1],$ the set $\bigcup _{\lambda \in \R^+} H _{\lambda \psi , \lambda \hat x , \alpha} \cup \{ 0\}$ is a vector space over $\R $ or $\C .$
       \end{theorem}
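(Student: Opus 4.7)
The plan is to verify that $V := \bigcup_{\lambda \in \R^+} H_{\lambda\psi, \lambda\hat x, \alpha} \cup \{0\}$ satisfies the three vector space axioms. The element $0$ is in $V$ by construction, so only closure under addition and closure under scalar multiplication remain. Throughout I would work with the equivalent reformulation \eqref{equation28.0} of the Hölder condition, since it is written in terms of the norm on $X$ and is therefore well adapted to linear manipulations of the sections.

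For \emph{addition}, take $\phi_i \in H_{\lambda_i \psi, \lambda_i \hat x, \alpha}$ with Hölder constants $L_i$ and basepoints $\hat y_i \in Y$ satisfying $\phi_i(\hat y_i) = \lambda_i\psi(\hat y_i) = \lambda_i \hat x$ for $i=1,2$. The natural target is
\[
\phi_1 + \phi_2 \in H_{(\lambda_1+\lambda_2)\psi,\,(\lambda_1+\lambda_2)\hat x,\,\alpha}
\]
with constant $L_1 + L_2$. The basepoint for the sum is produced using linearity of $\pi$ (so the $\hat y_i$ are forced to be proportional to $y_0 := \pi(\hat x)$ and their sum yields the basepoint of $\phi_1+\phi_2$), and the Hölder inequality for the sum reduces to the triangle inequality
\[
\|(\phi_1+\phi_2)(y) - (\lambda_1+\lambda_2)\psi(y)\| \leq \|\phi_1(y) - \lambda_1\psi(y)\| + \|\phi_2(y) - \lambda_2\psi(y)\|
\]
combined with the two Hölder bounds supplied by $\phi_1$ and $\phi_2$.

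For \emph{scalar multiplication} by $\mu \neq 0$, the candidate is $\mu\phi \in H_{|\mu|\lambda\psi,\,|\mu|\lambda\hat x,\,\alpha}$, and the key mechanism is Lemma~\ref{lem22a}. That homogeneity statement should convert $\|\mu\phi(y) - \mu\lambda\psi(y)\|$ into $|\mu|\cdot\|\phi(y) - \lambda\psi(y)\|$ and perform the analogous rescaling of the reference distance on the right-hand side of \eqref{equation28.0}; the original Hölder bound then re-emerges with constant $|\mu|^{1-\alpha}L$ (the linear tail being handled in the same way). The scalar $\mu=0$ simply returns the element $0 \in V$.

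The most delicate step is reconciling negative real or non-real complex scalars with the positivity restriction $\lambda \in \R^+$ in the parametrisation of the union. The hoped-for resolution is that the set $H_{\lambda'\psi,\lambda'\hat x,\alpha}$ depends on $\lambda'$ only through its modulus, so replacing $\mu\lambda$ by $|\mu|\lambda$ in the basepoint and the reference section is invisible to the Hölder condition — and this is precisely the symmetry I would expect Lemma~\ref{lem22a} to encode. Verifying that the basepoint identity $\mu\phi(\hat y) = |\mu|\lambda\,\psi(\hat y) = |\mu|\lambda\hat x$ holds for a suitable $\hat y$ in the linear space $Y$ is the main bookkeeping obstacle.
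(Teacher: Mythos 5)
The paper does not actually prove this theorem here; it is quoted from \cite{D22.1}, so there is no internal proof to compare against. Judged on its own merits, your sketch correctly identifies the mechanism for addition (if $\phi_i \in H_{\lambda_i\psi,\lambda_i\hat x,\alpha}$ then $\phi_1+\phi_2$ is naturally compared to $(\lambda_1+\lambda_2)\psi$ at $(\lambda_1+\lambda_2)\hat x$, and the Hölder bound follows from the triangle inequality on the norm of $X$) and for multiplication by a \emph{positive} real scalar.

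The gap you flag at the end is, however, genuine and your ``hoped-for resolution'' does not close it. The set $H_{\lambda'\psi,\lambda'\hat x,\alpha}$ does \emph{not} depend on $\lambda'$ only through $|\lambda'|$, because membership carries two constraints that are both sensitive to the sign (or argument) of $\lambda'$: the basepoint identity $\phi(\hat y)=\lambda'\psi(\hat y)=\lambda'\hat x$, and the requirement that $\phi$ be a section of the rescaled projection $(1/\lambda')\pi$ (visible in the $(1/\lambda\,\pi)^{-1}$ appearing in Lemma~\ref{lem22a}). If $\phi\in H_{\lambda\psi,\lambda\hat x,\alpha}$ with $\lambda>0$ and $\mu<0$, then $\mu\phi(\hat y)=\mu\lambda\hat x\neq|\mu|\lambda\hat x$ (for $\hat x\neq 0$) and $\pi(\mu\phi(y))=\mu\lambda\,y\neq|\mu|\lambda\,y$; so $\mu\phi$ is forced into $H_{\mu\lambda\psi,\mu\lambda\hat x,\alpha}$ with $\mu\lambda<0$, which is not in the union $\bigcup_{\lambda\in\R^+}$. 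Lemma~\ref{lem22a} only gives the homogeneity of the \emph{distance} estimate, namely $|\mu|\,d(x_1,\pi^{-1}(y_2))=d(\mu x_1,(1/\mu\,\pi)^{-1}(y_2))$; it does not make the set-indexing independent of the sign of $\mu$. To close the argument one would either need to interpret the indexing $\lambda$ as ranging over $\R\setminus\{0\}$ (or $\C\setminus\{0\}$ in the complex case), or produce an explicit bijection between $H_{\lambda\psi,\lambda\hat x,\alpha}$ and $H_{-\lambda\psi,-\lambda\hat x,\alpha}$ and argue that the union should include both — neither of which your proposal supplies. As it stands the closure under general scalar multiplication remains unproved.
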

       
         Notice that  it is no possible to obtain the statement for $H_{\psi , \hat x, \alpha }$ since the simply observation that if $\psi (\hat y) = \hat x$ then $\psi (\hat y ) + \psi (\hat y )  \ne \hat x.$ 
%
%
%

  \section{Normed space for the intrinsically Lipschitz sections}\label{Normed space for the intrinsic continuous sections}
    \subsection{Normed space: Version 1}
  In this section, we consider the case of intrinsically Lipschitz sections, i.e., $\alpha =1.$ 
  
    Let $\pi :\R ^\kappa \to Y$ be a quotient map with $Y\subset \R^\kappa $. Assume also that $K \subset Y$ is a compact set and $\psi _{|K} :K \to \R$ is an intrinsically $L$-Lipschitz section of $\pi$ with $L \geq 1$ and $\hat x = \psi (\bar y) \in \psi (Y).$   We will use the following notation $$ILS _{\lambda \psi _{|K},\lambda \hat x} := H  _{\lambda \psi _{|K},\lambda \hat x, 1 }.$$ 
  Here, we show that $$ (\LL , \|.\|):= \left( \bigcup_{\lambda \in \R^+} ILS _{\lambda \psi _{|K},\lambda \hat x}  \cup \{ 0\},\|.\| \right)$$ is a normed space for a suitable norm $\|.\|=\|. \|_{ ILS_{\lambda \psi , \lambda \hat x}}: \LL \to \R^+$  defined as for any $\phi \in \LL,$
   \begin{equation}\label{banachspace}
\|\phi \|_{ ILS_{\lambda \psi , \lambda \hat x}} := \|\phi\|_\infty + [\phi]_{\lambda \psi , \lambda \hat x},
\end{equation}
where $\|\phi \|_\infty := \sup_{y\in K} |\phi (y)|$ and 
\begin{equation*}
 [\phi]_{\lambda \psi , \lambda \hat x}:= \sup_{y \in K}  d(\lambda \phi (y),(1/ \lambda \pi )^{-1} (\pi(\hat x)))  .
\end{equation*}

Then, we are able to give the main result of this section.

   \begin{theorem}\label{24theoremNormed.1}
   Let $\pi :\R^\kappa \to Y$ be a linear and quotient map with $Y\subset \R^\kappa $. Assume also that $\psi :K \to \R^\kappa $ is an intrinsically $L$-Lipschitz section of $\pi$ with $K \subset Y $ compact, $L \geq 1$ and $\hat x \in X.$ Then, the set $ \LL$ endowed with  $\|\cdot \|_{ ILS_{\psi , \hat x}}$    is a normed space.
       \end{theorem}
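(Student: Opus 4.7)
The strategy is to verify, in order, the three defining properties of a norm for the candidate map $\|\cdot\|_{ILS_{\psi,\hat x}}:\LL\to\R^+$, relying on Theorem \ref{theorem24} to know that $\LL$ is already a genuine vector space (so $\phi+\psi$ and $c\phi$ stay inside $\LL$ and the question is merely analytic).

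\textbf{Positivity and definiteness.} Both summands $\|\phi\|_\infty$ and $[\phi]_{\lambda\psi,\lambda\hat x}$ are suprema of non-negative quantities, so $\|\phi\|\geq 0$ automatically. For definiteness it is enough to observe that $\|\phi\|_\infty=\sup_{y\in K}|\phi(y)|$ is already the standard uniform norm on $C(K,\R^\kappa)$, hence $\|\phi\|=0$ forces $\|\phi\|_\infty=0$ and therefore $\phi\equiv 0$ on $K$. The seminorm term $[\phi]_{\lambda\psi,\lambda\hat x}$ contributes no extra information for definiteness, which is the reason the $\|\phi\|_\infty$ summand is added in \eqref{banachspace}.

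\textbf{Homogeneity.} This is the step that really uses the structure of the problem, and it is where I expect the main difficulty. One must show that, for every $c\in\R$ (or $\C$) and every $\phi\in\LL$,
\begin{equation*}
\|c\phi\|_\infty+[c\phi]_{\lambda\psi,\lambda\hat x}=|c|\bigl(\|\phi\|_\infty+[\phi]_{\lambda\psi,\lambda\hat x}\bigr).
\end{equation*}
The sup-norm part is immediate because $|c\phi(y)|=|c||\phi(y)|$ pointwise. For the seminorm part the issue is that the scalar $\lambda$ appears both on the graph side $\lambda\phi$ and on the fibre side $(1/\lambda\,\pi)^{-1}(\pi(\hat x))$; I would invoke Lemma \ref{lem22a} (together with the linearity of $\pi$, which permits us to pull scalars in and out of $\pi^{-1}$) to rewrite $d(\lambda(c\phi)(y),(1/\lambda\,\pi)^{-1}(\pi(\hat x)))$ as $|c|\cdot d(\lambda\phi(y),(1/\lambda\,\pi)^{-1}(\pi(\hat x)))$, after which taking the supremum over $y\in K$ gives exactly $|c|[\phi]_{\lambda\psi,\lambda\hat x}$. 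The subtlety is checking that the lemma applies for both positive and negative (or complex) $c$, which in the complex case may require interpreting the distance to the affine fibre properly.

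\textbf{Triangle inequality.} Additivity of the sup norm is classical. For the seminorm part the argument is: fix $y\in K$ and use the linearity of $\pi$, which yields $(1/\lambda\,\pi)^{-1}(\pi(\hat x_1+\hat x_2))=(1/\lambda\,\pi)^{-1}(\pi(\hat x_1))+(1/\lambda\,\pi)^{-1}(\pi(\hat x_2))$ in the appropriate Minkowski sense, and then apply the triangle inequality of the Euclidean distance together with $\lambda(\phi_1+\phi_2)(y)=\lambda\phi_1(y)+\lambda\phi_2(y)$. Taking the supremum over $y$ and combining with the sup-norm inequality then yields $\|\phi_1+\phi_2\|\leq\|\phi_1\|+\|\phi_2\|$. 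Here I again rely on Lemma \ref{lem22a} to justify that the distance to the pre-image behaves additively under the vector-space operation on $\LL$; compactness of $K$ guarantees the suprema are finite on $\LL$, so no issue of $+\infty$ values arises.
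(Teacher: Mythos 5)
Your overall architecture matches the paper's: definiteness comes solely from the $\|\cdot\|_\infty$ summand, homogeneity is obtained by pulling the scalar through the fibre distance via Lemma \ref{lem22a} and linearity of $\pi$, and the triangle inequality reduces to the fact that the distance to a Minkowski-type sum of fibres is controlled by the sum of the individual distances to the fibres. The first two parts are correct and essentially identical to the paper's (and you are right to flag that Lemma \ref{lem22a} as stated is for real $\lambda$, which is all the paper's own proof of \eqref{banachspace.25} uses).

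Where your argument diverges from, and falls slightly short of, the paper is the triangle inequality. You invoke
\[
(1/\lambda\,\pi)^{-1}\bigl(\pi(\hat x_1+\hat x_2)\bigr)=(1/\lambda\,\pi)^{-1}(\pi(\hat x_1))+(1/\lambda\,\pi)^{-1}(\pi(\hat x_2)),
\]
but the norm $[\cdot]_{\lambda\psi,\lambda\hat x}$ is built on a single fixed base point $\hat x$; nothing in the problem makes $\hat x$ vary additively. What actually changes when you add two sections $\phi,\eta$ of $\pi$ is the scaling in front of $\pi$: since $\pi(\phi(y))=\pi(\eta(y))=y$, linearity gives $\pi\bigl((\lambda_1+\lambda_2)(\phi(y)+\eta(y))\bigr)=2(\lambda_1+\lambda_2)\,y$, so the fibre relevant to $\phi+\eta$ is $(2/(\lambda_1+\lambda_2)\,\pi)^{-1}(\pi(\hat x))$, not a fibre over a modified base point. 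The paper exploits exactly this: it picks $x_\phi,x_\eta$ realising the sup/inf for $\phi$ and $\eta$ separately, observes that $x_\phi+x_\eta$ lies in the rescaled fibre, and concludes by the triangle inequality of the Euclidean norm. Your Minkowski-sum instinct is the right one, and the inclusion you need ($F_\phi+F_\eta$ sitting inside the fibre for $\phi+\eta$) is indeed a consequence of linearity of $\pi$; but you must track the coefficient in $(1/\lambda\,\pi)^{-1}$, not the point $\hat x$. As written, the step does not quite close, although the fix is local and does not require a new idea.
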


   We need the following lemma.
   \begin{lem}[Lemma 4.7 \cite{D22.31may}]\label{lem22a}
   Let $X$ be a normed space, $Y$ be a topological space and $\pi: X \to Y$ be a linear and quotient map. Then
\begin{equation}\label{equ22may.1}
| \lambda | d(x_1,\pi ^{-1} (y_2) ) =  d(\lambda x_1, (1/\lambda  \pi )^{-1} (y_2) ), \quad \forall x_1\in \R^\kappa , y_2 \in Y, \lambda \in \R -\{0\}.
\end{equation}
   \end{lem}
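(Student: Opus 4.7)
The plan is to unravel the notation on the right-hand side and then reduce the identity to the homogeneity of the norm on $X$. The only subtle point is to interpret what $(1/\lambda\,\pi)^{-1}(y_2)$ means: the map $(1/\lambda)\pi$ sends $x\in X$ to $(1/\lambda)\pi(x)\in Y$ (so $Y$ is tacitly assumed to carry a scalar multiplication compatible with $\pi$, which is consistent with the ambient $\R^\kappa$ setup in the paper), and therefore
\[
(1/\lambda\,\pi)^{-1}(y_2)\;=\;\{x\in X\,:\,(1/\lambda)\pi(x)=y_2\}\;=\;\{x\in X\,:\,\pi(x)=\lambda y_2\}\;=\;\pi^{-1}(\lambda y_2).
\]

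Next I would use linearity of $\pi$ together with $\lambda\neq 0$ to rewrite this fiber in terms of the fiber over $y_2$. Specifically, if $z\in\pi^{-1}(y_2)$, then $\pi(\lambda z)=\lambda\pi(z)=\lambda y_2$, so $\lambda z\in\pi^{-1}(\lambda y_2)$; conversely, if $w\in\pi^{-1}(\lambda y_2)$, then $\pi(w/\lambda)=y_2$, so $w\in\lambda\pi^{-1}(y_2)$. This gives the set equality $\pi^{-1}(\lambda y_2)=\lambda\,\pi^{-1}(y_2)$.

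Combining the two observations and applying the definition of distance to a set in a normed space,
\[
d\bigl(\lambda x_1,(1/\lambda\,\pi)^{-1}(y_2)\bigr)=\inf_{z\in\pi^{-1}(y_2)}\|\lambda x_1-\lambda z\|=|\lambda|\inf_{z\in\pi^{-1}(y_2)}\|x_1-z\|=|\lambda|\,d(x_1,\pi^{-1}(y_2)),
\]
where the middle step is the $1$-homogeneity of the norm on $X$. This gives \eqref{equ22may.1}.

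The main (and really only) obstacle is the interpretation step: one has to be careful that the symbol $(1/\lambda\,\pi)$ refers to post-composition of $\pi$ with scalar multiplication on $Y$ rather than some ad hoc notation, and to verify that the two preimages coincide as subsets of $X$. Once this is settled, the proof is simply linearity of $\pi$ plus homogeneity of the norm, and no hypothesis beyond $\lambda\neq 0$ (needed to divide) is required.
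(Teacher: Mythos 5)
Your proof is correct, and it is essentially the only natural argument for this identity: unravel the notation for $(1/\lambda\,\pi)^{-1}(y_2)$, use linearity of $\pi$ together with $\lambda\neq 0$ to get $\pi^{-1}(\lambda y_2)=\lambda\,\pi^{-1}(y_2)$, and then pull $|\lambda|$ out of the infimum via homogeneity of the norm. The present paper does not reprove the lemma (it is quoted from \cite{D22.31may}), so there is no in-text proof to compare against, but your reading of the notation is the one consistent with how the lemma is actually invoked in the proof of Theorem \ref{24theoremNormed.1} and in the remark following it; note also that even if one read $(1/\lambda\,\pi)$ as $x\mapsto\pi(x/\lambda)$ instead of $x\mapsto(1/\lambda)\pi(x)$, linearity of $\pi$ makes the two preimages coincide, so the conclusion is unaffected.
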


  \begin{rem}
An easy corollary of Lemma \ref{lem22a} when $Y\subset \R$ and $X=\R^\kappa$ is that
  \begin{equation*}
   \begin{aligned}
\lim_{h\to 0^+} \frac{d(h\phi (t+h), (1/h \pi)^{-1} (t)))}{h} & =0,\\
\lim_{h\to 0^+} \frac{d(h\phi (t), (1/h \pi)^{-1} (t+h)))}{h} & =0,\\
\end{aligned}
\end{equation*}
for $t\in Y.$ Indeed, for $h>0$
  \begin{equation*}
   \frac{d(h\phi (t+h), (1/h \pi)^{-1} (t))}{h} = d(\phi (t+h),  \pi^{-1} (t)) \leq  d(\phi (t+h), \phi (t) ),
\end{equation*}
and so take to the limit for $h\to 0,$ we obtain the first limit thank to the continuity of $\phi$. In a similar way, it is possible to see the second limit.
   \end{rem}

At this point, we give the proof of Theorem \ref{24theoremNormed.1}.
\begin{proof}[Proof of Theorem \ref{24theoremNormed.1}]
The fact $\|\phi\| \equiv 0$ if and only if $\phi \equiv 0$ follows because $\|.\|_\infty $ is a norm. On the other hand, since $\pi $ is linear map  and thanks to Lemma \ref{lem22a}, we have $$\sup_{y \in K}  d(\delta \phi(y),(1/\delta \pi)^{-1} (\pi(\hat x))) = \sup_{y \in K} |\delta |  d(\phi (y), \pi^{-1} (\pi(\hat x)))$$ for any $\delta \in \R- \{ 0\}$ and so
   \begin{equation}\label{banachspace.25}
\|\delta \phi\|_\infty + [\delta \phi]_{\psi , \hat x} = |\delta | ( \|\phi\|_\infty + [\phi]_{\psi , \hat x}),
\end{equation}
for any $\phi \in \LL.$ 

Finally, the triangle inequality follows using again the linearity of $\pi.$ Indeed, if $\phi , \eta \in  \LL - \{ 0\} $ and, in particular, $\phi , \eta \in   ILS _{(\lambda _1+\lambda _2) \psi _{|K},(\lambda _1+\lambda _2) \hat x}$ then for $x_\phi , x_\eta \in \R^\kappa $ such that 
\begin{equation*}
\begin{aligned}
 [ \phi  ]_{(\lambda _1+\lambda _2) \psi , (\lambda _1+\lambda _2)  \hat x} &= \sup_{y \in K}  d( (\lambda _1 + \lambda _2 )  \phi (y) ,(1/ (\lambda _1 + \lambda _2 ) \pi )^{-1} (\pi(\hat x))) = d((\lambda _1 + \lambda _2 ) \phi (y), x_\phi ) \\
[\eta  ]_{ (\lambda _1+\lambda _2) \psi , (\lambda _1+\lambda _2) \hat x} & = \sup_{y \in K}  d( (\lambda _1 + \lambda _2 )  \eta (y),(1/(\lambda _1 + \lambda _2 ) \pi)^{-1} (\pi(\hat x))) = d((\lambda _1 + \lambda _2 ) \eta (y), x_\eta )  \\
\end{aligned}
\end{equation*}
one gets
\begin{equation*}
\begin{aligned}
 [ \phi +\eta ]_{(\lambda _1 + \lambda _2 )\psi ,(\lambda _1 + \lambda _2 ) \hat x} &= \sup_{y \in K}  d((\lambda _1 + \lambda _2 ) (\phi (y) + \eta (y)),(2/ (\lambda _1 +\lambda _2) \pi)^{-1} (\pi(\hat x))) \\
 & \leq \| (\lambda _1 + \lambda _2 )  \phi (y) + (\lambda _1 + \lambda _2 )  \eta (y) -(x_\phi + x_\eta )\| \\
 &\leq  \| (\lambda _1 + \lambda _2 )  \phi (y) -x_\phi \| + \| (\lambda _1 + \lambda _2 )  \eta (y) - x_\eta \|,\\
 &  [ \phi  ]_{(\lambda _1 + \lambda _2 ) \psi ,(\lambda _1 + \lambda _2 ) \hat x}  + [\eta  ]_{ (\lambda _1 + \lambda _2 ) \psi , (\lambda _1 + \lambda _2 )  \hat x},
\end{aligned}
\end{equation*}
where in the first equality, by linearity of $\pi,$ we used the fact 
\begin{equation*}
\begin{aligned}
\pi\big((\lambda _1 + \lambda _2 ) (\phi (y) + \eta (y)) \big) & = \pi \big((\lambda _1 + \lambda _2 ) \phi (y)  \big) +\pi \big((\lambda _1 + \lambda _2 ) \eta (y) \big) \\
&= (\lambda _1 + \lambda _2 ) (\pi(\phi (y)) + (\pi(\eta (y)) )\\
&= 2(\lambda _1 + \lambda _2 )y.
\end{aligned}
\end{equation*}
Hence, $$  [ \phi +\eta ]_{(\lambda _1 + \lambda _2 )\psi ,(\lambda _1 + \lambda _2 ) \hat x}  \leq   [ \phi  ]_{(\lambda _1 + \lambda _2 ) \psi ,(\lambda _1 + \lambda _2 ) \hat x}  + [\eta  ]_{ (\lambda _1 + \lambda _2 ) \psi , (\lambda _1 + \lambda _2 )  \hat x}, $$ and this complete the proof of the statement.

\end{proof}

  \subsection{Normed space: Version 2}
In the usual case, we have that $d(x,y) =d(y,x)$ for any point $x,y \in X;$ on the other hand, in our intrinsic context, in general, we have that $$d(f(x),\pi^{-1} (y)) \ne d(f(y),\pi^{-1} (x)),$$ for every $x,y \in X.$ In particular, it holds
\begin{equation}\label{equationDIFFERENZA.22}
\begin{aligned}
d(f(y), \pi^{-1} (x)) -d(f(z), \pi^{-1} (x))  & \leq d(f(y),f(z)), \quad \forall x,y,z \in Y\\
d(f(x), \pi^{-1} (y)) -d(f(x), \pi^{-1} (z)) & \nleq  d(f(y),f(z)), \quad \mbox{for some } x,y,z \in Y.\\
\end{aligned}
\end{equation}

In fact, for any fixed $x,y,z \in Y,$ if we choose $a \in \pi^{-1} (x)$ such that
\begin{equation*}
d(f(z), \pi^{-1} (x)) =d(f(z), a),
\end{equation*}
we deduce that
\begin{equation*} \begin{aligned}
d(f(y), \pi^{-1} (x)) -d(f(z), \pi^{-1} (x)) & \leq  d(f(y),a)  -d(f(z), \pi^{-1} (x)) \\ & \leq   d(f(y),f(z)) +d(f(z), a ) -d(f(z), \pi^{-1} (x)) \\
& = d(f(y),f(z)),\\
\end{aligned}
\end{equation*}  i.e., the first inequality of \eqref{equationDIFFERENZA.22} holds. 
On the other hand, for the second inequality in \eqref{equationDIFFERENZA.22}, we give the following example. let $X\subset \R^2$ the set given by the three lines with vertex $(0,8), (8,8);$ $(1,4), (8,6)$ and $(0,3),(8,7)$ and the subset $Y$ of $\R^2$ defined as the line with vertex $(0,0)$ and $(8,0).$ On $X$ we consider the usual distance on $\R^2.$ Then, if we consider  a continuous section $f:Y \to X$ of the projection on the first component $\pi :X\to Y$ with $f(x)=f((1,0))=(1,4), f(y)=f((7,0))=(8,7)$ and $f(z)=f((6,0))=(8,6),$ it is easy to see that
\begin{equation*} \begin{aligned}
d(f(x), \pi^{-1} (y)) -d(f(x), \pi^{-1} (z)) & = \sqrt{\frac{5}{4}}, \\
 d(f(y),f(z)) &= 1,
\end{aligned}
\end{equation*} 
and so \begin{equation*} \begin{aligned}
d(f(x), \pi^{-1} (y)) -d(f(x), \pi^{-1} (z)) & \nleq  d(f(y),f(z)).\\
\end{aligned}
\end{equation*} 

Then, it is not trivial to consider the norm $|||.|||$ defined as 
   \begin{equation}\label{banachspace.18}
|||\phi |||_{ ILS_{\psi , \hat x}} := \|\phi\|_\infty + \{\phi\}_{\lambda \psi , \lambda \hat x},
\end{equation}
where $\|\phi \|_\infty := \sup_{y\in K} |\phi (y)|$ and 
\begin{equation*}
\{\phi\}_{\lambda \psi , \lambda \hat x}:= \sup_{y \in K}  d(\lambda \hat x ,(1/ \lambda \pi )^{-1} (y))  .
\end{equation*}
and to prove as above the following statement.
  \begin{theorem}\label{24theoremNormed.2}
   Let $\pi :\R^\kappa \to Y$ be a linear and quotient map with $Y$ a metric space. Assume also that $\psi :K \to \R^\kappa $ is an intrinsically $L$-Lipschitz section of $\pi$ with $K \subset Y $ compact, $L \geq 1$ and $\hat x \in X.$ Then, the set $ \LL$ endowed with  $|||\cdot |||_{ ILS_{\psi , \hat x}}$ as in \eqref{banachspace.18}   is a normed space.
       \end{theorem}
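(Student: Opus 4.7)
The plan is to follow, line for line, the argument of Theorem \ref{24theoremNormed.1} and to verify the three norm axioms: (i) $|||\phi|||=0\iff\phi\equiv 0$, (ii) $|||\delta\phi|||=|\delta|\,|||\phi|||$, and (iii) $|||\phi+\eta|||\leq |||\phi|||+|||\eta|||$. Positive definiteness is immediate, since $|||\phi|||\geq \|\phi\|_\infty$ and $\|\cdot\|_\infty$ already separates points on $\LL$.

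For (ii), if $\phi\in ILS_{\lambda\psi_{|K},\lambda\hat x}$ and $\delta\in\R\setminus\{0\}$, the vector-space structure of $\LL$ from Theorem \ref{theorem24} gives $\delta\phi\in ILS_{|\delta|\lambda\psi_{|K},|\delta|\lambda\hat x}$, and Lemma \ref{lem22a} (which converts the $\lambda$-scaling inside the preimage into a scalar factor thanks to linearity of $\pi$) shows
\begin{equation*}
\{\delta\phi\}_{|\delta|\lambda\psi,|\delta|\lambda\hat x}=\sup_{y\in K}d\bigl(|\delta|\lambda\hat x,\,(1/(|\delta|\lambda)\pi)^{-1}(y)\bigr)=|\delta|\lambda\sup_{y\in K}d(\hat x,\pi^{-1}(y))=|\delta|\,\{\phi\}_{\lambda\psi,\lambda\hat x}.
\end{equation*}
Combined with $\|\delta\phi\|_\infty=|\delta|\|\phi\|_\infty$, this gives (ii). For (iii), take $\phi\in ILS_{\lambda_1\psi_{|K},\lambda_1\hat x}$ and $\eta\in ILS_{\lambda_2\psi_{|K},\lambda_2\hat x}$, so that Theorem \ref{theorem24} puts $\phi+\eta\in ILS_{(\lambda_1+\lambda_2)\psi_{|K},(\lambda_1+\lambda_2)\hat x}$. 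Setting $c:=\sup_{y\in K}d(\hat x,\pi^{-1}(y))$, Lemma \ref{lem22a} collapses each bracket to a multiple of $c$ and gives the identity
\begin{equation*}
\{\phi+\eta\}_{(\lambda_1+\lambda_2)\psi,(\lambda_1+\lambda_2)\hat x}=(\lambda_1+\lambda_2)c=\lambda_1 c+\lambda_2 c=\{\phi\}_{\lambda_1\psi,\lambda_1\hat x}+\{\eta\}_{\lambda_2\psi,\lambda_2\hat x},
\end{equation*}
which together with the usual triangle inequality for $\|\cdot\|_\infty$ proves (iii).

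The main obstacle is conceptual rather than computational. The whole motivation for this second norm is the asymmetry $d(f(x),\pi^{-1}(y))\neq d(f(y),\pi^{-1}(x))$ flagged in \eqref{equationDIFFERENZA.22}, so one might worry that the argument of Theorem \ref{24theoremNormed.1}, which relied on the vectorial bound $\|\lambda(\phi(y)+\eta(y))-(x_\phi+x_\eta)\|\leq \|\lambda\phi(y)-x_\phi\|+\|\lambda\eta(y)-x_\eta\|$, has no analogue here. The resolution is that $\{\phi\}$ now involves only the fixed reference point $\lambda\hat x$ rather than the varying graph point $\lambda\phi(y)$, so the $\phi$-dependence enters solely through the $\lambda$-index inherited from Theorem \ref{theorem24}; Lemma \ref{lem22a} then makes (ii) and (iii) exact equalities. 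The only residual subtlety I anticipate is checking that $|||\cdot|||$ is a well-defined function of $\phi$ alone, i.e.~that its value does not depend on the choice of $\lambda$ when $\phi$ lies in several of the $ILS_{\lambda\psi_{|K},\lambda\hat x}$; this must be read off the explicit vector-space construction behind Theorem \ref{theorem24}.
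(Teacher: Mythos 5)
The paper's own ``proof'' of this theorem is a single sentence referring back to Theorem~\ref{24theoremNormed.1}, so what you have supplied is a genuine filling-in of details that the paper omits, and on the whole your argument is sound. Your treatment of positive definiteness and homogeneity mirrors what the paper does for Version 1 (reduce to $\|\cdot\|_\infty$ and to Lemma~\ref{lem22a}). Where you genuinely diverge from the Version-1 template is the triangle inequality. The Version-1 proof has to pick near-optimal points $x_\phi,x_\eta$ in the fibre over $\pi(\hat x)$ and invoke the vectorial triangle inequality in $\R^\kappa$ on $\lambda(\phi(y)+\eta(y))-(x_\phi+x_\eta)$, because the bracket $[\phi]_{\lambda\psi,\lambda\hat x}$ contains the term $\lambda\phi(y)$. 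You correctly observe that the Version-2 bracket $\{\phi\}_{\lambda\psi,\lambda\hat x}=\sup_{y\in K}d(\lambda\hat x,(1/\lambda\,\pi)^{-1}(y))$ contains no $\phi(y)$ at all, so by Lemma~\ref{lem22a} it collapses to $|\lambda|\cdot\sup_{y\in K}d(\hat x,\pi^{-1}(y))$, and the triangle inequality for the bracket part becomes an exact equality driven by the additivity of the $\lambda$-indices coming from Theorem~\ref{theorem24}. That is a cleaner route than the one the paper's cross-reference would suggest, and it is a useful structural remark: it makes explicit that $|||\cdot|||$ separates from $\|\cdot\|_\infty$ only through the scaling index $\lambda$, not through the values of $\phi$. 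Your closing caveat is also well placed: because $\{\phi\}_{\lambda\psi,\lambda\hat x}$ depends only on $\lambda$, the expression $|||\phi|||$ is well defined only if $\phi$ determines $\lambda$ uniquely (or at least determines $|\lambda|$), which must indeed be extracted from the construction behind Theorem~\ref{theorem24}; the paper does not address this, and it is the same gap, inherited, that also affects the meaning of \eqref{banachspace}. The one small point I would tighten in your write-up of (ii) is the claim $\delta\phi\in ILS_{|\delta|\lambda\psi,\,|\delta|\lambda\hat x}$ for $\delta<0$: since $\lambda\hat x\in\phi(Y)$ forces $\delta\lambda\hat x\in(\delta\phi)(Y)$, not $|\delta|\lambda\hat x$, the sign needs the same extra justification already required in Theorem~\ref{theorem24} to keep the index in $\R^+$; this is again an issue of the paper's setup rather than of your argument per se, but it deserves an explicit sentence.
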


\begin{proof}
The proof follows in a similar way as in Theorem \ref{24theoremNormed.1}.
\end{proof}

%

\section{Level sets and extensions}\label{Level sets and extensions}

In this section we prove the following theorem.

\begin{theorem}[Extensions as level sets]\label{thm3.Rkappa}  
 Let $\pi:X\to Y \times \R^s$ be a quotient map 
between a  metric space $X$ and  a topological space $Y \times \R^s$. 

Assume that $X$ is geodesic and that there exist $k\geq 1,$ $\rho : X \times X \to \R$ k-biLipschitz equivalent to the distance of $X,$ and $\tau =(\tau_1,\dots , \tau _s) :X \to \R^s $ k-Lipschitz and $k$-biLipschitz on  fibers such that for all $\tau_0\in \R ^s$  \begin{enumerate}
\item  the set $\tau _1^{-1}(\tau_0)$ is an intrinsically $k$-Lipschitz graph of a section $\phi_{1,\tau_0}:Y\times \R \times \{ 0\}^{s-1} \to X;$ the set $\tau _2^{-1}(\tau_0)$ is an intrinsically $k$-Lipschitz graph of a section $\phi_{2,\tau_0}:Y\times \{ 0\} \times \R \times \{ 0\}^{s-2} \to X;$ \dots ,   the set $\tau _s^{-1}(\tau_0)$ is an intrinsically $k$-Lipschitz graph of a section $\phi_{s,\tau_0}:Y\times  \{ 0\}^{s-1} \times \R \to X;$
\item for all $x_0\in \tau _i^{-1} (\tau_0)$ the map $X\to \R, x \mapsto \delta _{i, \tau_0} (x) := \rho (x_0, \phi_{i, \tau_0}(\pi (x)))$ is $k$-Lipschitz on the set $\{|\tau _i| \leq \delta _{i, \tau_0} \}$.
\end{enumerate}
Let $Y'  \times \R^s \subset Y  \times \R^s$ a set and $L\geq1$.
Then for every  intrinsically $L$-Lipschitz section  $\phi : Y'  \times \R^s \to \pi^{-1}(Y'  \times \R^s) $  of $\pi|_{\pi^{-1}(Y'  \times \R^s )} :\pi^{-1}(Y'  \times \R^s) \to  Y'  \times \R^s$, there exists a  map $ f:X\to \R ^s$    that is $K$-Lipschitz and $K$-biLipschitz on  fibers, with $K= 2k(Lk+2)$, such that     \begin{equation}\label{equationluogozeri_intro}
\phi (Y'  \times \R^s )\subseteq f^{-1} (0).
\end{equation}
In particular, each `partially defined' intrinsically Lipschitz graph $\phi (Y'  \times \R^s )$ is a subset of a `globally defined' intrinsically Lipschitz graph $ f^{-1} (0)$.
 \end{theorem}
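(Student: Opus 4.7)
The strategy, as the author indicates right after the statement and following Vittone's scheme, is to build each scalar component $f_i:X\to\R$ separately and then to assemble them into a single vector-valued map $f=(f_1,\dots,f_s):X\to\R^s$. For each fixed index $i\in\{1,\dots,s\}$, hypotheses~(1) and~(2) read with the single coordinate $\tau_i$ in place of ``$\tau$'' supply exactly the structural data required by the scalar Extension Theorem \cite[Theorem~1.3]{D22.1}: hypothesis~(1) exhibits every level set $\tau_i^{-1}(\tau_0)$ as an intrinsically $k$-Lipschitz graph via the section $\phi_{i,\tau_0}$, and hypothesis~(2) furnishes the local $k$-Lipschitz control of the associated distance function $\delta_{i,\tau_0}$ that drives the extension construction in \cite{D22.1}.

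Applying that scalar theorem to the given intrinsically $L$-Lipschitz section $\phi$, with $\tau_i$ playing the role of the transverse direction, then produces, for each $i$, a $K$-Lipschitz map $f_i:X\to\R$ with $K=2k(Lk+2)$ satisfying $\phi(Y'\times\R^s)\subseteq f_i^{-1}(0)$. Intuitively, $f_i(x)$ should be interpreted as a signed $\tau_i$-``distance'' from $x$ to $\phi(Y'\times\R^s)$, built by extending $\phi$ off $Y'\times\R^s$ along the leaves $\phi_{i,\tau_0}$ of the foliation defined by $\tau_i$. I would then set $f:=(f_1,\dots,f_s)$ and equip $\R^s$ with the $\ell^\infty$-norm. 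The inclusion $\phi(Y'\times\R^s)\subseteq f^{-1}(0)$ is immediate from the componentwise inclusions, and the upper Lipschitz estimate $\|f(x)-f(x')\|_\infty\leq Kd(x,x')$ follows componentwise at no cost.

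The main obstacle, and the step that explains why allowing $s\geq1$ components is essential, is the lower biLipschitz estimate on the fibers of $\pi$: no individual $\tau_i$ need be biLipschitz on fibers, only their collection $\tau=(\tau_1,\dots,\tau_s)$ is. The plan is to exploit the construction of each $f_i$ to show that, for $x,x'\in X$ in the same $\pi$-fiber, $f_i(x)-f_i(x')$ agrees with $\tau_i(x)-\tau_i(x')$ up to a controlled error (the ``base-point'' term of the signed distance cancels because $\pi(x)=\pi(x')$). Taking the $\ell^\infty$ maximum over $i$ then gives a comparison between $\|f(x)-f(x')\|_\infty$ and $\|\tau(x)-\tau(x')\|_\infty$, and the joint $k$-biLipschitz hypothesis on $\tau$ restricted to fibers of $\pi$ converts this into the desired lower bound $d(x,x')\leq K\|f(x)-f(x')\|_\infty$. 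This is precisely the step that, as the author remarks, fails if we drop the $\R^s$ factor from the target: without the collective biLipschitz structure of $\tau$ one cannot assemble the componentwise construction into a globally biLipschitz-on-fibers map.
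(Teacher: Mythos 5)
Your proposal follows the same overall architecture as the paper's proof: build each scalar component $f_i$ by the extension construction applied to the single coordinate $\tau_i$, then assemble $f=(f_1,\dots,f_s)$. In the paper this is exactly \textbf{Step 1} (which writes out the explicit three-branch formula for $f_{x_0,i}$ and takes the supremum over $x_0\in\phi(Y')$) followed by \textbf{Step 2} (the joining step, stated in one terse sentence).

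There is, however, one genuine imprecision in your first paragraph that matters. You claim that hypotheses (1) and (2), read coordinate-wise, ``supply exactly the structural data required by the scalar Extension Theorem \cite[Theorem~1.3]{D22.1}'' and that applying that theorem as a black box yields each $f_i$. But the scalar theorem also requires the transverse coordinate to be biLipschitz \emph{on fibers}, and --- as you yourself point out two paragraphs later --- no individual $\tau_i$ is assumed biLipschitz on fibers, only the full vector $\tau=(\tau_1,\dots,\tau_s)$ is. So the scalar theorem's hypotheses are not satisfied for $\tau_i$, and you cannot cite its conclusion wholesale. What you \emph{can} legitimately harvest from the scalar construction is the $K$-Lipschitz upper bound and the vanishing $\phi(Y'\times\R^s)\subseteq f_i^{-1}(0)$, since neither uses biLipschitzness of $\tau_i$ on fibers; this is implicitly what you do in practice, but the first paragraph should say ``re-run the construction'' rather than ``apply the theorem.'' (For what it is worth, the paper's own Step~1 item~$(iii)$ commits the converse sin: it asserts that the scalar $f_{x_0,i}$ is biLipschitz on fibers ``because $\tau$ is so too,'' where $\tau$ there denotes the single component $\tau_i$ --- a property not among the hypotheses. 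Your analysis correctly moves the biLipschitz claim to the vector $f$.)

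Your plan for the fiber lower bound is the right one, and is actually more explicit than the paper's Step~2, which disposes of the issue in one opaque sentence. To make it rigorous you should sharpen ``agrees up to a controlled error'': for $x,x'$ in the same fiber, $\delta_{i,\tau_0}(x)=\delta_{i,\tau_0}(x')$ exactly (the paper's observation), so for each base point $x_0$ the map $a\mapsto f_{x_0,i}$ is a continuous piecewise-linear function of $a=\tau_i(\cdot)-\tau_i(x_0)$ with slopes $3,2,1$ on the three branches, hence monotone increasing with slope in $[1,3]$. This gives the two-sided estimate $|\tau_i(x)-\tau_i(x')|\le |f_{x_0,i}(x)-f_{x_0,i}(x')|\le 3\,|\tau_i(x)-\tau_i(x')|$, and the lower bound survives passage to $f_i=\sup_{x_0}f_{x_0,i}$ by monotonicity. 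Taking the $\ell^\infty$ maximum over $i$ then transfers the fiberwise lower estimate $d(x,x')\le k\,\|\tau(x)-\tau(x')\|_\infty$ to $f$, as you indicate. So the error is multiplicative, not additive; with that correction and the caveat about invoking the scalar theorem versus its construction, the proposal is sound and matches the paper's route.
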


  We need to mention that there have been several earlier partial results on extensions of Lipschitz graphs, as for example in \cite{FSSC06},  \cite[Theorem~1.5]{Vittone20},  \cite[Proposition~4.8]{MR3194680}, in the  Heisenberg group with codimension larger than one; \cite[Proposition~3.4]{V12},  for the case of codimension one in the Heisenberg groups;  \cite[Theorem~4.1]{FS16},  for the case of codimension one in Carnot groups. Finally, for the general metric spaces the reader can see  \cite{ALPD20, LN05, O09}.

  \begin{proof}[Proof of Theorem~\ref{thm3.Rkappa} \noindent{\rm \bf(\ref{thm3.Rkappa}.i)}]
It is proved in \cite{DDLD21}.
\end{proof}

\medskip

  \begin{proof}[Proof of Theorem $\ref{thm3.Rkappa}$ \noindent{\rm \bf(\ref{thm3.Rkappa}.ii)}]  \noindent{\rm \bf Step 1.}   Fix $i=1,\dots , s$ and, for simplicity, we write $\tau ^{-1}, f_{x_0}$ instead of $\tau ^{-1}_i$ and $f_{x_0,i}.$
Fix $x_0 \in  \tau ^{-1} (\tau_0).$ We consider the map $f_{x_0} :X \to \R$ defined as
\begin{equation}\label{int}
f_{x_0}(x)=\left\{ 
\begin{array}{lcl}
2(\tau (x)-\tau (x_0) - \alpha \delta _{\tau _0} (x)) &   & \mbox{ if } \, \, |\tau (x)-\tau (x_0)| \leq 2\alpha \delta _{\tau _0}(x)  \\
\tau (x)-\tau (x_0) &    & \mbox{ if }\,\, \tau (x)-\tau (x_0) > 2\alpha \delta _{\tau _0}(x)  \\
3( \tau (x)-\tau (x_0)) &    & \mbox{ if }\,\, \tau (x)-\tau (x_0) < -2\alpha \delta _{\tau _0}(x)  \\
\end{array}
\right.
\end{equation}
where $\alpha := kL+1.$  
We prove that $f_{x_0}$ satisfies the following properties:
\begin{description}
\item[$(i)$]  $f_{x_0}$ is $K$-Lipschitz;
\item[$(ii)$]  $f_{x_0}(x_0)=0;$
\item[$(iii)$]  $f_{x_0}$ is biLipschitz on fibers. 
\end{description}
where $K= \max\{ 3k, 2k+2\alpha k \}=2k+2\alpha k$ because $\alpha >1.$ The property $(i)$ derives using that $\tau , \delta _{\tau _0}$ are both Lipschitz and $X$ is a geodesic space. On the other hand, $(ii)$ is true noting that  $\delta _{\tau _0}(x_0)= \rho (x_0, \phi_{\tau_0}(\pi (x_0)))=0$ because $x_0 \in \phi _{\tau_0} (Y).$

Finally, for any $x,x' \in \pi ^{-1} (y) $ we have that $\rho (x_0, \phi_{\tau_0}(\pi (x)))=\rho (x_0, \phi_{\tau_0}(\pi (x')))$  and so $f$ is biLipschitz on fibers because $\tau$ is so too.  
Hence $(iii)$ holds.

At this point, we consider the map $f:X \to \R$ given by
\begin{equation*}
f(x) := \sup _{x_0 \in \phi (Y)} f_{x_0} (x), \quad \forall x \in X,
\end{equation*}
and we want to prove that it is the map we are looking for. The Lipschitz properties are true recall that the function $\delta _{x_0} $ is constant on the fibers. Consequently, the only non trivial fact to show is \eqref{equationluogozeri_intro}. 
Fix $\bar x_0 \in \phi (Y')$.
By $(ii)$ we have that $f_{ \bar x_0}( \bar x_0)=0$ and so it is sufficient  to prove that $f_{x_0}(\bar x_0)\leq 0$ for $x_0\in \phi (Y').$ Let $x_0\in \phi (Y').$  Then  using in addition that $\tau$ is $k$-Lipschitz, and that $\phi$ is  intrinsically $L$-Lipschitz, we have
\begin{equation*}
|\tau (\bar x_0) -\tau( x_0)| \leq k d(\bar x_0 , x_0) \leq Lk d(x_0, \pi^{-1} (\pi(\bar x_0)) ) \leq  Lk d(x_0,  \phi_{\tau_0}(\pi (\bar x_0)) ) <\alpha \delta _{\tau _0} (\bar x _0),
\end{equation*}
and so
\begin{equation*}
f_{x_0}(\bar x_0) = 2(\tau (\bar x_0)-\tau (x_0) - \alpha \delta _{\tau _0} (\bar x_0)) <0,
\end{equation*}
i.e., \eqref{equationluogozeri_intro} holds.

 \noindent{\rm \bf Step 2.} We consider $f=(f_1,\dots , f_s)$ where each $f_i$ is given by  $\tau _i^{-1}.$ Roughly speaking, here the problem is that when we put together $(f_1,\dots , f_s)$ using $\tau _1^{-1},\dots , \tau _s^{-1} $ then $f$ can not Lipschitz. But now $f$ is Lipschitz thanks to the construction of $Y\times \R^s$ and in particular of $\phi_{1,\tau_0}:Y\times \R \times \{ 0\}^{s-1} \to X; \dots ;\phi_{s,\tau_0}:Y \times \{ 0\}^{s-1}\times \R \to X .$ 
   \end{proof}

%
%

 \bibliographystyle{alpha}
\bibliography{DDLD}

\end{document}